\newtheorem{thm}{Proposition}
\newtheorem{cor}[thm]{Corollary}
\newtheorem{prop}[thm]{Proposition}
\begin{document}

\title{Symplectic integrators for index one constraints}
\author{Robert I McLachlan%
\thanks{Institute of Fundamental Sciences, Massey University,
	Private Bag 11 222, Palmerston North 4442, New Zealand (r.mclachlan@massey.ac.nz, m.c.wilkins@massey.ac.nz)}
	 \and Klas Modin\thanks{Department of Mathematical Sciences, Chalmers University of Technology, Gothenburg, Sweden (klas.modin@chalmers.se)}\and Olivier Verdier\thanks{Department of Mathematical Sciences,
	NTNU,
	7491 Trondheim,
	Norway (olivier. verdier@math.ntnu.no)} \and Matt Wilkins$^*$}

\maketitle

\begin{abstract}\noindent
 We show that symplectic Runge--Kutta methods provide effective
 symplectic integrators for Hamiltonian systems with index one 
 constraints. These include the Hamiltonian description of variational
 problems subject to position and velocity constraints
 nondegenerate in the velocities, such as those arising in 
 sub-Riemannian geometry and control theory.
\end{abstract}

\begin{keywords} Symplectic integrators, differential-algebraic equations, index one systems, variational nonholonomic equations, vakonomic equations, optimal control problems\end{keywords}

\begin{AMS} 65L80, 65P10, 70H45, 49J15\end{AMS}

\pagestyle{myheadings}
\thispagestyle{plain}
\markboth{R I MCLACHLAN, K MODIN, O VERDIER, AND M WILKINS}{SYMPLECTIC INTEGRATORS FOR INDEX ONE CONSTRAINTS}

\section{Introduction: constrained Hamiltonian systems}
\label{sec:srconstraints}
There are several different types of constrained dynamical systems. First, the constraints themselves can be {\em holonomic} (depending only on position) or {\em nonholonomic} (depending on position and velocity, but not the derivative of a holonomic constraint). Nonholonomic constraints are associated with two main types of dynamical system (we adopt the terminology of \cite{bloch}):
\begin{remunerate}
\item The {\em dynamic nonholonomic equations}, also known as the Lagrange--d'Alem\-bert equations, that describe many mechanical systems in rolling and sliding contact. They have been the subject of several studies in geometric numerical integration (see, e.g., \cite{monforte,fe-bl-ol,mc-pe,mo-ve} and references therein), but as the equations are not in general Hamiltonian or variational and their geometric properties are not fully understood, there is no consensus as to their best discrete version.
\item The {\em variational nonholonomic equations}, also known as the vakonomic equations, that are the subject of this paper. They arise in two main contexts in dynamics. 
The first is from the {\em Lagrange problem}, that of finding curves $q(t)$ in the configuration manifold  $M$ that make the action
$$ \int_{t_0}^{t_1} L(q,\dot q)\, dt$$
stationary amongst all curves satisfying the fixed endpoint conditions $q(t_0)=q_0$, $q(t_1)=q_1$ and satisfying the nonholonomic constraints $g(q,\dot q)=0$. If the action has the form
$$ \int_{t_0}^{t_1} \left< \dot q,\dot q\right>_q \, dt$$
for some metric $\left<,\right>_q$ on $M$
and the constraint is that $q(t)$ is tangent to a maximally nonintegrable distribution on $M$, the Lagrange problem becomes
the {\em sub-Riemannian geodesic problem} that defines
the sub-Riemannian geometry of $M$ \cite{ledonne,montgomery}\footnote{Also known as Carnot geometry in France and nonholonomic Riemannian geometry in Russia \cite{vershik}; the metric induced by sub-Riemannian geodesics is known as the Carnot-Carath\'eodory metric.}, an active branch of geometry. Some Lie groups (the Carnot groups, of which the Heisenberg group is an example) have a natural sub-Riemannian structure. 
\end{remunerate}

The second is in optimal control problems. Under (quite weak) conditions, the Lagrange problem is equivalent to the optimal control problem $\min_u\int_{t_0}^{t_1} g(q,u)\, dt$ subject to $q(t_0)=q_0$, $q(t_1)=q_1$, and $\dot q = f(q,u)$---the control $u$ can be eliminated (\cite{bloch}, Thm 7.3.3).
Many applications of the variational nonholonomic equations---to kinematic sub-Riemannian optimal control problems, to control on semi-simple Lie groups and symmetric spaces, to the motion of a particle in a magnetic field, and to optimal control on Riemannian manifolds and Lie groups---are discussed in detail in Chapter 7 of \cite{bloch}. Many familiar situations---from parking a car (an example we model numerically in Section \ref{sec:srexample}), riding a bike, rolling a ball, to controlling a satellite or a falling cat controlling itself---are described using the variational nonholonomic equations.

A Hamiltonian formulation of the variational nonholonomic equations is to consider
\begin{equation}
\label{eq:1}
 J \dot z = \nabla H(z),\quad z\in C\subset \mathbb{R}^m
 \end{equation}
where $z\in \mathbb{R}^m$, $\omega :=  \frac{1}{2}dz\wedge J dz$ is a closed 2-form\footnote{We use vector notation in wedge products, writing $dq\wedge dp$ for $\sum_{i=1}^m dq_i\wedge dp_i$ and $dz\wedge J dz$ for $\sum_{i,j=1}^m J_{ij} dz_i \wedge dz_j$, where the dimension $m$ is determined from the context.}, 
$H\colon\mathbb{R}^m\to R$ is a Hamiltonian, and $C$ is a constraint
submanifold such that $i^*\omega$ (where $i\colon C\to\mathbb{R}^m$ is the inclusion of $C$ in $\mathbb{R}^m$) is nondegenerate, i.e., such that $(C,i^*\omega)$  is a symplectic manifold. The dynamics on $C$ depends only on the restricted Hamiltonian $i^*H$ and restricted symplectic form $i^*\omega$. Indeed,  systems with
holonomic constraints also take this form, with $z=(q,p)$, $\omega = dq\wedge dp$, and
$C= \{(q,p): h_i(q)=0,\ Dh_i(q)H_p(q,p) = 0,\ 1\le i\le k\}$
consisting of primary and secondary constraints; a nondegeneracy assumption ensures that $C$ is symplectic. The widely used {\sc rattle} method \cite{simhamdyn,mc-mo-ve-wi} provides a 
(class of) symplectic integrators for this case when $J$ is constant: it integrates in coordinates $z$ with Lagrange multipliers to enforce the constraints. However, it is striking that there are no known symplectic integrators for general constrained Hamiltonian systems of the form
of Eq. (\ref{eq:1}).

 In this paper we describe a class of symplectic integrators for a class of Hamiltonian systems of the form (\ref{eq:1}) containing constraints that can depend on both position and velocity. The systems are those of index 1 and the integrators are given in Propositions \ref{thm:symrk} and \ref{thm:4} which are our main results. The class includes the Hamiltonian description of 
the variational nonholonomic equations, including the sub-Riemannian geodesic equations, and
  we give this application first, in Propositions \ref{thm:lagham} and \ref{thm:laghamgeneral}, as it motivates the consideration of index 1 Hamiltonian systems. The construction is generalized to include both holonomic and nonholonomic constraints in Section \ref{sec:holo}. Sample applications are given to calculating the sub-Riemannian geodesics of a wheeled vehicle (the `parallel parking' problem) in Section \ref{sec:srexample} and of the Heisenberg group in Section \ref{sec:hexample}.
  
In the following proposition, the linear independence assumption on the constraints is equivalent to 
constraining the velocities to lie in an $(n-k)$-dimensional distribution of the tangent
space of the positions.

\begin{prop}\label{thm:lagham}
   Let $M$ be a symmetric nonsingular $n \times n$ mass matrix,
   $V\colon \mathbb{R}^n \rightarrow \mathbb{R}$ a smooth
   potential, $g_i:\mathbb{R}^n\to\mathbb{R}^n$, $i=1,\dots,k$ be  smooth functions whose values are linearly independent for all arguments, and $q $ be  a smooth extremal with fixed endpoints for the functional
   \begin{equation}
   \label{eq:action}
   S(q )=\int_{t_0}^{t_1} L(t, q , \dot{q }) dt = \int_{t_0}^{t_1} \left( \frac{1}{2} \dot{q }^T M \dot{q } - V(q )\right) dt
   \end{equation}
   subject to the constraints $g _i(q ) \cdot \dot{q } = 0, i = 1, \ldots, k$. Then
   \begin{equation}
   \label{eq:ham}
      J\dot{z } = \nabla H(z )
   \end{equation}
   where
   \begin{align*}
      J &= \begin{pmatrix}0 & - \mathbb{I}_{n \times n} & 0\\
         \mathbb{I}_{n \times n} & 0 & 0\\0 & 0 & 0_{k \times k}\end{pmatrix} ,\quad
   z  = \begin{pmatrix} q  \\ p  \\ \lambda   \end{pmatrix}\in\mathbb{R}^{2n+k}, \\
      p   &=  M \dot{q } - \sum_{i=1}^k \lambda_i g _i(q ), \\
      H(z ) &= \frac{1}{2} \left( p  + \sum_{i=1}^k \lambda_i g _i(q ) \right)^T M^{-1}  \left( p  + \sum_{i=1}^k \lambda_i g _i(q ) \right)  + V(q ), 
   \end{align*}
and, furthermore, the Euler--Lagrange equations for (\ref{eq:action}) are equivalent to the generalized Hamiltonian system (\ref{eq:ham}).
Eq. (\ref{eq:ham}) forms a constrained Hamiltonian system of the type (\ref{eq:1}) with constraint submanifold $C$ a graph
over $(q,p)$, i.e., $C := \{(q,p,\lambda)\colon \lambda=\tilde\lambda(q,p)\}$
and restricted symplectic form $i^*\omega = dq\wedge dp$.

\end{prop}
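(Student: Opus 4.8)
The plan is to obtain the Hamiltonian system by a (degenerate) Legendre transform of the augmented vakonomic Lagrangian, and then to read off the constraint manifold and its induced form directly from the block structure of $J$. First I would append the constraints to the action with time-dependent multipliers $\lambda_i(t)$, forming the augmented Lagrangian $\tilde L(q,\dot q,\lambda)=\frac12\dot q^T M\dot q-V(q)-\sum_i\lambda_i\,g_i(q)\cdot\dot q$ on the extended configuration space with coordinates $(q,\lambda)$. Varying $\lambda$ returns the constraints $g_i(q)\cdot\dot q=0$, while varying $q$ gives the constrained Euler--Lagrange equations, so these are exactly the equations whose Hamiltonian form must be produced. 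The momentum conjugate to $q$ is $p=\partial\tilde L/\partial\dot q=M\dot q-\sum_i\lambda_i g_i(q)$, matching the stated $p$, whereas the momentum conjugate to $\lambda$ vanishes identically because $\tilde L$ is independent of $\dot\lambda$; this degeneracy is the source both of the algebraic constraint and of the zero block in $J$. Carrying out the partial Legendre transform $H=p\cdot\dot q-\tilde L$ and eliminating $\dot q$ via $\dot q=M^{-1}(p+\sum_i\lambda_i g_i(q))$ yields, after collecting terms, precisely the stated $H(z)$.

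Next I would expand $J\dot z=\nabla H$ blockwise. With the given $J$ the three blocks read $\dot q=\nabla_p H$, $\dot p=-\nabla_q H$, and $0=\nabla_\lambda H$. The first is just the inverse Legendre relation $\dot q=M^{-1}(p+\sum_i\lambda_i g_i)$, equivalent to the definition of $p$. A short computation gives $\nabla_{\lambda_i}H=g_i(q)^T M^{-1}(p+\sum_j\lambda_j g_j)=g_i(q)\cdot\dot q$, so the third block reproduces the constraints. Finally, substituting the definition of $p$ into $\dot p=-\nabla_q H$ and comparing with $\frac{d}{dt}\partial_{\dot q}\tilde L=\partial_q\tilde L$ shows the middle block is exactly the constrained Euler--Lagrange equation for $q$; this term-by-term matching (care being needed to distinguish $Dg_i$ from its transpose, as the two enter the time derivative of $p$ and the $q$-gradient of $H$ respectively) establishes the asserted equivalence.

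To identify the constraint manifold I would take $C$ to be the zero set of the algebraic part $\nabla_\lambda H=0$. By the previous computation this is the linear system $\sum_j\big(g_i(q)^T M^{-1}g_j(q)\big)\lambda_j=-g_i(q)^T M^{-1}p$ for $\lambda$, whose coefficient matrix is the Gram matrix $G(q)$ with entries $G_{ij}=g_i(q)^T M^{-1}g_j(q)$. The crux is that $G(q)$ is invertible: this follows from the linear independence of the $g_i$ once $M$ (hence $M^{-1}$) is definite, and it is exactly the nondegeneracy condition that renders the system index $1$. Invertibility lets me solve uniquely $\lambda=\tilde\lambda(q,p):=-G(q)^{-1}b(q,p)$ with $b_i=g_i^T M^{-1}p$, exhibiting $C=\{\lambda=\tilde\lambda(q,p)\}$ as a graph over $(q,p)$.

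It remains to compute $i^*\omega$. The decisive structural observation is that the $\lambda$-rows and $\lambda$-columns of $J$ vanish, so $J\,dz$ carries no $d\lambda$ component and $\omega=\frac12 dz\wedge J\,dz$ is built solely from $dq$ and $dp$; evaluating the sum gives the canonical form $dq\wedge dp$ (up to the paper's orientation convention). Since $C$ is a graph parametrized by $(q,p)$, pulling back along $i(q,p)=(q,p,\tilde\lambda(q,p))$ leaves $dq$ and $dp$ unchanged and, as no $d\lambda$ occurs, $i^*\omega=dq\wedge dp$, which is manifestly nondegenerate, so $(C,i^*\omega)$ is symplectic and (\ref{eq:ham}) has the claimed form (\ref{eq:1}). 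I expect the main obstacle to be the invertibility of $G(q)$ --- that is, solving algebraically for the multiplier --- since this is the single place where the linear independence and nondegeneracy hypotheses are genuinely used and is what makes the system index $1$; the degenerate Legendre transform also requires care to ensure the Hamiltonian differential-algebraic system faithfully encodes the constrained variational equations.
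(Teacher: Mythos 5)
Your proposal is correct and follows essentially the same route as the paper: the same augmented Lagrangian $F(q,\dot q,\lambda)$, the same (degenerate) Legendre transform yielding $p$ and $H$, the same blockwise identification of $J\dot z=\nabla H$ with the constrained Euler--Lagrange equations, the same linear system $\big(g_i^T M^{-1}g_j\big)\lambda = -\big(g_i^T M^{-1}p\big)$ solved to exhibit $C$ as a graph, and the same reading of $i^*\omega=dq\wedge dp$ from the zero $\lambda$-block of $J$. If anything, your remark that invertibility of the Gram matrix $G M^{-1} G^T$ requires $M$ to be \emph{definite} (not merely symmetric nonsingular, as the paper's proof asserts suffices together with full rank of $G$) is a sharper observation than the paper's own argument.
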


 \begin{proof}
Introducing Lagrange multipliers $\lambda_1,\dots,\lambda_k$,
the Euler--Lagrange equations for (\ref{eq:action}) are 
\begin{gather}
   \frac{d}{d t} \left( \nabla_{\dot{q }} F \right) - \nabla_{q } F = 0, \label{eqn:eulerlag}\\ 
   g _i(q ) \cdot \dot{q } = 0, \quad i = 1, \ldots, k, \label{eqn:constraints}
\end{gather}
where
\[
F(q,\dot q,\lambda) = \frac{1}{2} \dot{q }^T M \dot{q } - V(q ) -  \sum_{i=1}^k \lambda_i g _i(q ) \cdot \dot{q }.
\]
Expanding out equation~(\ref{eqn:eulerlag}) gives the Euler--Lagrange equations
\begin{align}
   \frac{d}{d t} \left( M \dot{q } - \sum_{i=1}^k \lambda_i g _i(q ) \right) - \nabla_{q } F &= 0, && \nonumber \\
   \frac{d}{d t} \left( M \dot{q } - \sum_{i=1}^k \lambda_i g _i(q ) \right) + \left( \nabla V(q ) + \sum_{i=1}^k \lambda_i Dg _i(q )  \dot{q } \right) &= 0. &&  
\end{align}

Define the conjugate momentum $p\in\mathbb{R}^n $ using the standard Legendre
transform
\begin{equation}
   p   := \nabla_{\dot{q }} F = M \dot{q } - \sum_{i=1}^k \lambda_i g _i(q ) \label{eqn:hamp}
\end{equation}
so that
\begin{equation} 
   \dot{q } = M^{-1} \left( p   + \sum_{i=1}^k \lambda_i g _i(q ) \right). \label{eqn:hamqdot} 
\end{equation}
Using equations~(\ref{eqn:hamp}) and~(\ref{eqn:hamqdot}) in equation~(\ref{eqn:eulerlag}) gives 
\begin{equation} 
   \dot{p} = -\nabla V(q ) - \sum_{i=1}^k \lambda_i Dg _i(q ) M^{-1} \left( p  + \sum_{j=1}^k \lambda_j g _j(q ) \right). \label{eqn:hamsecond} 
\end{equation}

Defining $H(q,p,\lambda) := \dot{q } \cdot p  - F(q,\dot q,\lambda)$ gives
\begin{align*}
H &= \dot{q } \cdot p  - F(q,\dot q,\lambda) \\
&= \dot{q } \cdot p  - \frac{1}{2} \dot{q }^T M \dot{q } +V(q )  + \sum_{i=1}^k \lambda_i g _i(q ) \cdot \dot{q } \\
&= \dot{q } \cdot \left( p  - \frac{1}{2} M \dot{q } + \sum_{i=1}^k \lambda_i g _i(q ) \right) + V(q ) \\
&= \dot{q } \cdot \left( M \dot{q } - \sum_{i=1}^k \lambda_i g _i(q )  - \frac{1}{2} M \dot{q } + \sum_{i=1}^k \lambda_i g _i(q ) \right) + V(q )  && \\
&= \dot{q } \cdot \left( \frac{1}{2} M \dot{q } \right) + V(q ) \\
&= \frac{1}{2} \left( p  + \sum_{i=1}^k \lambda_i g _i(q ) \right)^T M^{-1}  \left( p  + \sum_{i=1}^k \lambda_i g _i(q ) \right)  + V(q ). && 
\end{align*}
A calculation shows the equivalence of the right hand side of (\ref{eqn:hamqdot}) and $\nabla_p H$; of the right hand side of (\ref{eqn:hamsecond}) and $-\nabla_q H(q,p,\lambda)$; and of constraints $g_i(q)\cdot \dot q=0$ and $0 = \nabla_\lambda H(q,p,\lambda)$.

The constraints $0=\nabla_\lambda H(q,p,\lambda)$ are the following set
of equations linear in $\lambda$,
\begin{equation}
\begin{pmatrix}
   g _1 \cdot M^{-1} g _1 & \cdots & g _1 \cdot M^{-1} g _k \\
   \vdots & \ddots & \vdots \\
   g _k \cdot M^{-1} g _1 & \cdots & g _k \cdot M^{-1} g _k
\end{pmatrix}
\begin{pmatrix}
   \lambda_1 \\
   \vdots \\
   \lambda_k
\end{pmatrix} = -
\begin{pmatrix}
   g _1 \cdot M^{-1} p  \\
   \vdots \\
   g _k \cdot M^{-1} p 
\end{pmatrix} \label{eqn:lambdasystem}
\end{equation}
which has a unique solution for $\lambda$ for all $q$, $p$ because the matrix is $G M^{-1} G^T$ where $G$ is the $k\times n$ matrix whose $i$th row is $g_i^T$. The  assumption that the $g_i$ are linearly independent means that $G$ has full rank $k$ and hence that $G M^{-1} G^T$ is nonsingular. The constraints therefore have a unique solution for $\lambda$ that we write as
$\lambda=\tilde\lambda(q,p)$, that is, the constraint submanifold is a graph over $(q,p)$. Differentiating these constraints with respect to $t$ then yields ODEs for $\dot\lambda$, that is, the system (\ref{eq:ham}) has (differentiation) index 1. The symplectic form on $C$ is $\frac{1}{2}dz \wedge J dz = dq\wedge dp$.
\qquad \end{proof}

We emphasize that although Proposition \ref{thm:lagham} is not original, the usual treatment is to go one
step further and eliminate the Lagrange multipliers $\lambda$ to get a canonical Hamiltonian system in $(q,p)$
(\cite{bloch}, Thm. 7.3.1). This step may not be desirable either analytically or numerically.

Under certain conditions, namely that 
the Legendre transform
that defines the conjugate momenta is invertible to give
$\dot{q }$, 
Proposition~\ref{thm:lagham} can be generalized to allow a
general Lagrangian and general constraints. A very thorough
geometric treatment of this type of constraint, applying the 
Gotay--Nestor geometric version of the Dirac--Bergmann constraint algorithm,
can be found in \cite{ma-co-de}.  The proof of the following proposition follows the same lines as Proposition \ref{thm:lagham} and is omitted.


\begin{prop}\label{thm:laghamgeneral}
   If the Legendre transform mapping $(\dot{q }, q , \lambda  ) \rightarrow
   (p , q , \lambda  )$ given in equation~(\ref{eqn:hampdfn3}) is invertible then
   the Euler--Lagrange equations for the action
   \[
   S(q )=\int_{t_0}^{t_1} L(t, q , \dot{q }) dt
   \]
   subject to the constraints $g_i(q ,\dot{q }) = 0, i = 1, \ldots, k$ are equivalent
   to the generalized Hamiltonian system
      \begin{equation} \label{eq:genh2}
      J\dot{z } = \nabla H(z )
   \end{equation}
   where
   \begin{align}
      J &= \begin{pmatrix}0 & - \mathbb{I}_{n \times n} & 0\\
         \mathbb{I}_{n \times n} & 0 & 0\\0 & 0 & 0_{k \times k}\end{pmatrix},\quad
   z  = \begin{pmatrix} q  \\ p  \\ \lambda   \end{pmatrix} , \quad
      p  = \nabla_{\dot{q }} F(q , \dot{q }, \lambda  ), \label{eqn:hampdfn3} \\
      H(z ) &= \dot{q } \cdot p  - F( q , \dot{q },\lambda  ),\quad
      F( q ,\dot{q }, \lambda  ) = L(t, q , \dot{q }) - \sum_{i=1}^k \lambda_i g_i(q ,\dot{q }). \nonumber  
   \end{align}
If, in addition, the matrix $G(q,\dot q)$ given by $G_{ij} = \partial g_i(q,\dot q)/\partial \dot q_j$ has full rank $k$ for all $q$, $\dot q$, then the system of Eq. (\ref{eq:genh2}) has index 1, i.e., can be solved for $\lambda=\tilde \lambda(q,p)$.

\end{prop}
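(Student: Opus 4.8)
The plan is to mirror the proof of Proposition~\ref{thm:lagham} line by line, replacing the explicit quadratic Legendre transform $p = M\dot q - \sum_i\lambda_i g_i$ by the abstract invertibility hypothesis on~(\ref{eqn:hampdfn3}). First I would introduce the multipliers through $F = L - \sum_i \lambda_i g_i$ and write the constrained Euler--Lagrange equations as $\frac{d}{dt}(\nabla_{\dot q}F) - \nabla_q F = 0$ together with $g_i(q,\dot q)=0$. Setting $p := \nabla_{\dot q}F$ and invoking invertibility of the Legendre map, I solve this relation for the velocity as a function $\dot q = \dot q(q,p,\lambda)$. This is the one place where the hypothesis is used, and it is exactly the generalization of the step in Proposition~\ref{thm:lagham} where the linear relation is inverted; note that here $F$ is no longer quadratic, so the velocity Hessian will in general depend on $\lambda$.

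The equivalence with~(\ref{eq:genh2}) then follows from the usual Legendre cancellations applied to $H = p\cdot\dot q - F$. Differentiating $H$ as a function of $(q,p,\lambda)$ and using $p=\nabla_{\dot q}F$ to cancel every term carrying a factor $\partial\dot q/\partial(\cdot)$, one obtains $\nabla_p H = \dot q$, $\nabla_q H = -\nabla_q F$, and $\nabla_\lambda H = -\nabla_\lambda F$. Since $\nabla_{\lambda_i}F = -g_i$, the last identity gives $\nabla_{\lambda_i}H = g_i$. Reading off the three blocks of $J\dot z = \nabla H$ then reproduces $\dot q = \nabla_p H$, the Euler--Lagrange equation $\dot p = \nabla_q F = -\nabla_q H$, and the algebraic constraint $0 = \nabla_\lambda H$, which is precisely $g_i(q,\dot q)=0$.

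For the index-1 claim I would apply the implicit function theorem to $\Phi(q,p,\lambda) := g\bigl(q,\dot q(q,p,\lambda)\bigr) = 0$, solving it for $\lambda=\tilde\lambda(q,p)$. The required Jacobian is computed by differentiating the defining relation $p=\nabla_{\dot q}F(q,\dot q,\lambda)$ with respect to $\lambda$ at fixed $(q,p)$: writing $W := \partial^2 F/\partial\dot q\,\partial\dot q$ for the velocity Hessian and using $\partial^2 F/\partial\dot q_l\partial\lambda_j = -G_{jl}$, one finds $W\,(\partial\dot q/\partial\lambda) = G^T$, so $\partial\dot q/\partial\lambda = W^{-1}G^T$. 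Hence
\[
   \frac{\partial\Phi}{\partial\lambda} = G\,\frac{\partial\dot q}{\partial\lambda} = G\,W^{-1}G^T,
\]
the exact analogue of the matrix $GM^{-1}G^T$ of~(\ref{eqn:lambdasystem}). Once $\tilde\lambda$ exists, differentiating $g(q,\dot q)=0$ once in $t$ gives an explicit ODE $\dot\lambda = D\tilde\lambda(q,p)\cdot(\nabla_p H,-\nabla_q H)$, confirming that a single differentiation of the constraint suffices---index~1.

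The main obstacle is the invertibility of $GW^{-1}G^T$. Full rank of $G$ yields this at once when $W$ is definite, as for the positive-definite mass matrix $M$ of Proposition~\ref{thm:lagham}; but for a merely invertible $W$ full rank of $G$ is \emph{not} sufficient, since $GW^{-1}G^T$ can be singular when $W$ is indefinite. I would therefore either strengthen the stated hypothesis to definiteness of the velocity Hessian, or replace ``$G$ has full rank'' by the sharp condition that $GW^{-1}G^T$ be nonsingular, which is exactly what the implicit function theorem demands.
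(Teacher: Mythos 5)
Your proposal is correct and is essentially the proof the paper intends: the paper omits the proof of Proposition~\ref{thm:laghamgeneral}, saying only that it ``follows the same lines as Proposition~\ref{thm:lagham}'', and your first two paragraphs carry out exactly that generalization (the Legendre cancellations giving $\nabla_p H=\dot q$, $\nabla_q H=-\nabla_q F$, $\nabla_\lambda H=g$, and hence the three blocks of $J\dot z=\nabla H$). Your treatment of the index-1 claim, via the implicit function theorem applied to $\Phi(q,p,\lambda)=g\bigl(q,\dot q(q,p,\lambda)\bigr)$ with Jacobian $GW^{-1}G^T$, is the natural completion of the sketch, and your closing observation is a genuine catch rather than a defect of your argument: full rank of $G$ does \emph{not} suffice when the velocity Hessian $W=F_{\dot q\dot q}$ is merely invertible. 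A concrete counterexample is $L=\frac{1}{2}(\dot q_1^2-\dot q_2^2)$ with the single constraint $g=\dot q_1+\dot q_2$: the Legendre map is invertible and $G=(1,1)$ has full rank, yet $\Phi=p_1-p_2$ is independent of $\lambda$ (equivalently $GW^{-1}G^T=0$), so no $\tilde\lambda(q,p)$ exists and the system is not index 1. Note that the same lacuna sits in the paper's own proof of Proposition~\ref{thm:lagham}, where $M$ is assumed only ``symmetric nonsingular'' yet nonsingularity of $GM^{-1}G^T$ is deduced from full rank of $G$; that inference requires $M$ (respectively $W$) to be definite, or else the hypothesis should be stated directly as nonsingularity of $GW^{-1}G^T$, exactly as you propose.
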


Proposition \ref{thm:laghamgeneral} can be generalized further, to any singular Lagrangian $L(q,\dot q,\lambda)$, and still
further to Lagrangians $L(z,\dot z)$ where $|L_{\dot z\dot z}|=0$, but the required nondegeneracy assumptions are not as geometrically transparent as those in Proposition~\ref{thm:laghamgeneral}.

\section{Symplectic integrators for generalized Hamiltonian systems}
\label{sec:integrators}
The Hamiltonian form (\ref{eq:ham}) suggests considering generalized Hamiltonian systems of the form 
\begin{equation}
\label{eqn:degham}
J\dot z = \nabla H(z),\quad z\in \mathbb{R}^m,
\end{equation}
where $J$ is a constant antisymmetric matrix, and we do not specify the constraints. Note that many kinds of constrained Hamiltonian systems (including those with holonomic constraints) can be written in this form; the constraint manifold $C$ is constructed as the subset of initial conditions for which the equations have a solution. In general,  these equations may not have solutions for all initial conditions; in the extreme case $J=0$, the equations are purely algebraic. However, it is easily seen that any solutions that do exist do preserve the (`pre-symplectic') 2-form $\frac{1}{2}dz\wedge J dz$, which is degenerate when $J$ is singular---this does not require the invertibility of $J$.
\begin{lemma} Any solutions to Eq. (\ref{eqn:degham}) preserve the 2-form $\frac{1}{2}dz\wedge J dz$.
\end{lemma}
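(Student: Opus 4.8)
The plan is to interpret ``preservation'' in the sense appropriate to a possibly degenerate $J$: since (\ref{eqn:degham}) is then a differential-algebraic system whose solutions exist only for initial data on the constraint manifold $C$, there need be no flow on an open subset of $\mathbb{R}^m$, and the meaningful statement is that $\omega(\xi_1,\xi_2)$ is constant in $t$ whenever $\xi_1,\xi_2$ are variation fields along a solution. First I would rewrite the constant-coefficient $2$-form as the bilinear form $\omega(u,v)=u^T J v$, the factor $\tfrac12$ being absorbed by the antisymmetry of $J$. Then, given a smooth two-parameter family of solutions $z(t;s_1,s_2)$ of (\ref{eqn:degham}) passing through a fixed solution at $s_1=s_2=0$, I would set $\xi_a=\partial z/\partial s_a$ and differentiate the equation in $s_a$ to obtain the linearized (variational) equations $J\dot\xi_a=\nabla^2H(z)\,\xi_a$, where $\nabla^2H$ denotes the Hessian of $H$.

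The key step is a one-line computation of $\tfrac{d}{dt}\,\omega(\xi_1,\xi_2)=\dot\xi_1^T J\xi_2+\xi_1^T J\dot\xi_2$. Using $\dot\xi_1^T J=-(J\dot\xi_1)^T$, which is just the antisymmetry of $J$, together with the variational equation gives $\dot\xi_1^T J\xi_2=-\xi_1^T\nabla^2H\,\xi_2$, while directly $\xi_1^T J\dot\xi_2=\xi_1^T\nabla^2H\,\xi_2$. The two terms cancel \emph{precisely because $\nabla^2H$ is symmetric}, so $\omega(\xi_1,\xi_2)$ is independent of $t$; this is exactly the assertion that the pre-symplectic form is preserved, and the argument never inverts $J$. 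An alternative I would keep in reserve is the Cartan-formula route: since $J$ is constant, $\omega$ is closed, so $\mathcal{L}_X\omega=d(\iota_X\omega)$, and along a solution $\iota_X\omega$ is the $1$-form $v\mapsto X^T J v=-(JX)^T v=-dH(v)$, which is exact, whence $\mathcal{L}_X\omega=0$. This is slicker but presupposes a genuine vector field $X$ and its flow, which is delicate when $J$ is singular, so I would present the variational computation as the primary argument.

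The main obstacle is conceptual rather than computational. Because $J$ may be degenerate, the usual symplectic-flow proof---which defines $X=J^{-1}\nabla H$ and invokes $\phi_t^*\omega=\omega$---is not available, and one must be careful that ``the flow preserves $\omega$'' even makes sense. The variational formulation sidesteps this entirely by asserting preservation only along an actual family of solutions, and the whole content reduces to the symmetry of the Hessian $\nabla^2H$. I expect no further difficulty; the calculation is short and the hypotheses (constancy and antisymmetry of $J$) are exactly what is needed.
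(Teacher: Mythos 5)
Your proposal is correct and is essentially the paper's own argument: the paper differentiates $\tfrac{1}{2}\,dz\wedge J\,dz$ in $t$, uses antisymmetry of $J$ to combine the two terms, substitutes $J\,dz_t = H_{zz}(z)\,dz$ (the exterior derivative of the equation, i.e.\ your variational equation), and concludes from the symmetry of the Hessian. Your variation-field computation is the same argument written out pointwise on tangent vectors $\xi_1,\xi_2$ along solutions, with exactly the same two ingredients (antisymmetry of $J$, symmetry of $\nabla^2 H$) doing the work, and it makes explicit the sense in which a degenerate $J$ still permits a preservation statement.
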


{\em Proof.}
We have
\begin{align*}
{\textstyle\frac{1}{2}} (dz\wedge J dz)_t 
& = {\textstyle\frac{1}{2}}(dz\wedge J dz_t + dz_t \wedge J dz) \\
&= dz\wedge J dz_t \\
&= dz\wedge H_{zz}(z) dz \\
&= 0. \qquad\endproof
\end{align*}

In the particular case of Proposition \ref{thm:lagham}, the generalized Hamiltonian system that is obtained is equivalent to a canonical Hamiltonian system obtained by eliminating the Lagrange multipliers $\lambda$. Let $\lambda=\tilde\lambda(q,p)$ be the solution
to (\ref{eqn:lambdasystem}). Then Hamilton's equations for 
$\tilde{H}(q , p ) :=  H (q , p ,
{\tilde \lambda  }(q , p ))$ are 
\begin{align*}
\dot q_i & = \frac{\partial\tilde{H}}{\partial p_i}(q,p)\\
& = \frac{\partial H}{\partial p_i}(q,p,\tilde \lambda(q,p)) + \sum_{j=1}^k \frac{\partial H}{\partial \lambda_j}(q,p,\tilde \lambda(q,p))
\frac{\partial \tilde \lambda_j}{\partial p_i}(q,p)\\
& = \frac{\partial H}{\partial p_i}(q,p,\tilde \lambda(q,p)) 
\end{align*}
(and similarly for $\dot p$) which, together with $\frac{\partial H}{\partial \lambda}(q,p,\tilde\lambda(q,p))=0$, are equivalent to (\ref{eq:ham}).
That is, the two operations of eliminating the Lagrange multipliers and mapping the Hamiltonian to its Hamiltonian vector field commute; this can also be seen abstractly by considering the symplectic manifold $C$ with canonical coordinates $(q,p)$, symplectic form $dq\wedge dp$, and Hamiltonian $i^*H$.

Certain Runge--Kutta methods, e.g. the midpoint rule, are known to be symplectic when the
structure matrix $J$ is invertible~\cite{simhamdyn}.  However, as for the continuous time case, $J$ need not be invertible.

\begin{prop}\label{thm:symrk}
Any solutions of any symplectic Runge--Kutta method applied to
$J\dot{z } = \nabla H$ preserve the 2-form
$\frac{1}{2}dz\wedge J dz$, where $J$ is any constant antisymmetric matrix.
\end{prop}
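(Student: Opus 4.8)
The plan is to carry out the classical Runge--Kutta symplecticity computation while being careful never to solve for $\dot z$ or to invert $J$. Write the method through its stage values $Z_i$ and stage derivatives $\dot Z_i$, related by $Z_i = z_0 + h\sum_j a_{ij}\dot Z_j$ and $z_1 = z_0 + h\sum_i b_i\dot Z_i$, with each stage constrained by the equation itself, $J\dot Z_i = \nabla H(Z_i)$; the natural unknowns are the slopes $\dot Z_i$, defined implicitly, and this is the only place the problem is ``solved''. Differentiating the update relation with respect to the initial data gives $dz_1 = dz_0 + h\sum_i b_i\, d\dot Z_i$, and the goal is to show $dz_1 \wedge J\,dz_1 = dz_0 \wedge J\,dz_0$, since the constant factor $\tfrac12$ is irrelevant.

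First I would expand $dz_1 \wedge J\,dz_1$ using this relation, producing $dz_0\wedge J\,dz_0$, cross terms linear in the $d\dot Z_i$, and terms quadratic in $d\dot Z_i \wedge J\,d\dot Z_j$. Antisymmetry of $J$ gives the identity $\alpha\wedge J\beta = \beta\wedge J\alpha$ for any two vector-valued $1$-forms $\alpha,\beta$ (using $J_{ab}=-J_{ba}$ together with antisymmetry of the wedge), which lets me fold the two cross terms together and symmetrize the quadratic coefficient. Eliminating $dz_0$ in favour of $dZ_i$ through $dz_0 = dZ_i - h\sum_j a_{ij}\,d\dot Z_j$ then collects everything into
\[
dz_1 \wedge J\,dz_1 = dz_0\wedge J\,dz_0 + 2h\sum_i b_i\, dZ_i \wedge J\,d\dot Z_i - h^2\sum_{ij}\bigl(b_i a_{ij} + b_j a_{ji} - b_i b_j\bigr)\, d\dot Z_i \wedge J\,d\dot Z_j .
\]
The quadratic sum vanishes by the defining symplecticity condition $b_i a_{ij} + b_j a_{ji} - b_i b_j = 0$ on the coefficients.

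The surviving linear term is dispatched exactly as in the preceding Lemma. Differentiating the stage equation $J\dot Z_i = \nabla H(Z_i)$ gives $J\,d\dot Z_i = H_{zz}(Z_i)\,dZ_i$, so each term equals $dZ_i \wedge H_{zz}(Z_i)\,dZ_i$, which is zero because a symmetric matrix contracted against the antisymmetric $dZ_i \wedge dZ_i$ annihilates it. Hence $dz_1 \wedge J\,dz_1 = dz_0 \wedge J\,dz_0$, as claimed.

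I expect the only genuine obstacle to be conceptual rather than computational: confirming that every step remains valid when $J$ is singular. The point to verify is that nothing in the derivation ever forms $J^{-1}$ --- the slopes $\dot Z_i$ enter only through the differentiated relation $J\,d\dot Z_i = H_{zz}(Z_i)\,dZ_i$, and both the cancellation of the quadratic terms (symplecticity of the coefficients) and the vanishing of the linear terms (symmetry of $H_{zz}$) are insensitive to whether $J$ is invertible. Granting well-posedness of the stage equations, the degeneracy of $J$ therefore plays no role, and the proof reduces to the nondegenerate case verbatim.
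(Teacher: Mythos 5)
Your proof is correct and follows essentially the same route as the paper's: it is the classical Burrage--Butcher symplecticity computation, with the quadratic terms cancelled by the coefficient condition $b_ia_{ij}+b_ja_{ji}-b_ib_j=0$ and the linear terms cancelled by differentiating the stage constraint to get $J\,d\dot Z_i = H_{zz}(Z_i)\,dZ_i$ and invoking symmetry of the Hessian against the wedge. The only cosmetic difference is that the paper writes the stage and update relations pre-multiplied by $J$ (so only $JZ_i$, $Jz_1$, and $JF_j=\nabla H(Z_j)$ are constrained), whereas you keep the unmultiplied relations with implicit slopes $\dot Z_i$; the algebra is identical.
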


\begin{proof} The $s$ stage symplectic Runge-Kutta method is
\begin{eqnarray}
J Z _{i} & = & J z _{0}+\Delta t\sum_{j=1}^{s}a_{ij} J F _j,\label{eqn:symplecticrkfirst}\\
J z _{1} & = & J z _{0}+\Delta t\sum_{j=1}^{s}b_{j} J F _j, \label{eqn:symplecticrksecond}
\end{eqnarray}
where
\begin{equation}
   J F _j = \nabla H(Z _j). \label{eqn:symplecticrkF}
\end{equation}
Here $\Delta t$ is the time step and the method maps $z_0$ to $z_1$.
The coefficients of a symplectic Runge--Kutta method obey
\begin{equation}
b_{i}b_{j}-b_{j}a_{ji}-b_{i}a_{ij} = 0. \label{eqn:symplecticrkcond}
\end{equation}
Taking the exterior derivative of equations~(\ref{eqn:symplecticrkfirst}),~(\ref{eqn:symplecticrksecond}), and~(\ref{eqn:symplecticrkF}) gives
\begin{gather}
   J dz _0 = JdZ _{i}-\Delta t\sum_{j=1}^{s}a_{ij}JdF _{j}, \label{eqn:symplecticrkdfirst} \\
   J dz _1 = Jdz _0-\Delta t\sum_{j=1}^{s}b_jJdF _j, \label{eqn:symplecticrkdsecond} \\
   J dF _j = H_{zz}(Z_j) dZ _j.   \label{eqn:symplecticrkdF}
\end{gather}
From equation~(\ref{eqn:symplecticrkdF}),
\begin{equation}
dZ _j \wedge JdF _j=dZ _j \wedge H_{zz}(Z_j)dZ _j=0. \label{eqn:symplecticrkzero}
\end{equation}
Substitution now gives, in the same way as in the original study of B-stability by Burrage and
Butcher \cite{bu-bu}, 
\begin{align*}
dz_1 \wedge J dz_1 &= dz_0 \wedge J dz_0 + 2\Delta t \sum_{j=1}^s dZ_j\wedge JdF_j + \\
& \qquad (\Delta t)^2
\sum_{i,j=1}^s(b_i b_j - b_j a_{ji} - b_i a_{ij})dF_i \wedge J dF_j \\
&= dz_0\wedge J dz_0.
\end{align*}
using \eqref{eqn:symplecticrkcond} and \eqref{eqn:symplecticrkzero}.
This establishes the proposition.
\end{proof}

Note that the underlying structure can be seen very clearly in the case of the midpoint rule
\begin{equation}
   \frac{Jz _1 - Jz _0}{\Delta t} = \nabla H \left(\frac{z _0 + z _1}{2}\right) := \nabla H(\bar z) .\label{eqn:ime}
\end{equation}
for which
\begin{align*}
dz _1  \wedge J dz _1 - dz _0 \wedge J dz _0
& = (dz _0 + dz _1) \wedge J (dz _1 - dz _0) \cr
& = (dz _0 + dz _1) \wedge \frac{1}{2} \Delta t H_{zz}(\bar z) (dz _0 + dz _1)\cr
& = 0.
\end{align*}

A full study of the geometry of the relations $(z_0,z_1)$ generated in Proposition \ref{thm:symrk} remains to be undertaken.\footnote{The relations generated in Proposition \ref{thm:symrk} are a generalization of the Viterbo generating functions used in symplectic topology \cite{viterbo}. These take the form $S\colon Q\times\mathbb{R}^k\to\mathbb{R}$; the submanifold
$p= S_q(q,\lambda)$, $0 = S_\lambda(q,\lambda)$ is Lagrangian in $T^*Q$. The parameters $\lambda$ allow the representation
of larger classes of Lagrangian submanifolds than the standard generating function $S(q)$ which generates $p=S_q(q)$ which is necessarily a graph over $Q$.} Unfortunately, the relations $(z_0,z_1)$ in Proposition \ref{thm:symrk} do not yield good integrators for arbitrary $J$ and $H$. For example, holonomic constraints can be specified as generalized Hamiltonian systems with $H=\tilde H(q,p) + \sum_{i=1}^k \lambda_i h_i(q)$. In this case the midpoint rule, say, generates maps from {\em all} $(q_0,p_0)$ to 
$(q_1,p_1)$ with the constraints satisfied at the midpoint. Not only is the phase space `wrong', this method is known to be not convergent in general \cite{hairer-jay}. The situation is much better for index 1 constraints.

\section{Symplectic integrators for index 1 constraints}
\ \\[-4.5mm]
\begin{prop}
\label{thm:4}
Let $J$ be any constant antisymmetric matrix and let $H$ be a Hamiltonian such that the generalized Hamiltonian system
\begin{equation}
\label{eqn:indexone}
J \dot z = \nabla H(z)
\end{equation}
has index 1, i.e., such that when the system is written in Darboux coordinates $(q,p,\lambda)$, the constraint $H_\lambda = 0$ has a unique solution for $\lambda$ for all $q$ and $p$. Then any symplectic Runge--Kutta method (\ref{eqn:symplecticrkfirst})--(\ref{eqn:symplecticrkcond}) applied to (\ref{eqn:indexone}) is well-defined for sufficiently small $\Delta t$, convergent of the same order as the Runge--Kutta method, preserves the constraint submanifold, and preserves the symplectic form on the constraint submanifold.
\end{prop}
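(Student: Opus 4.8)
The plan is to work in Darboux coordinates $(q,p,\lambda)$ so that the system splits into the genuine Hamiltonian dynamics $\dot q = H_p$, $\dot p = -H_q$ carried by the symplectic block of $J$, together with the purely algebraic constraint $H_\lambda(q,p,\lambda)=0$ coming from the zero block. The index~1 hypothesis says this constraint has a unique solution $\lambda = \tilde\lambda(q,p)$ for every $(q,p)$, and by the implicit function theorem $\tilde\lambda$ is as smooth as $H$, with the relevant Jacobian $H_{\lambda\lambda}$ invertible along the constraint manifold $C=\{\lambda=\tilde\lambda(q,p)\}$. The four claimed properties (well-posedness, order, constraint preservation, symplecticity) should then be established essentially one block at a time, with the symplecticity already handed to us by Proposition~\ref{thm:symrk}.

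The first step is \emph{well-definedness} for small $\Delta t$. Writing out the stage equations (\ref{eqn:symplecticrkfirst})--(\ref{eqn:symplecticrkF}) in the split coordinates, the rows of $J$ corresponding to the symplectic block give the usual implicit RK equations for the stage values $(Q_i,P_i)$, while the rows corresponding to the $\lambda$-block force $H_\lambda(Z_j)=0$ at \emph{every} stage, i.e.\ each stage point must lie on $C$. I would set this up as a root-finding problem $\Phi(\{Z_i\};\Delta t, z_0)=0$ and apply the implicit function theorem at $\Delta t = 0$: there the stage equations collapse to $Z_i = z_0$ together with $H_\lambda(Z_i)=0$, which by index~1 has the unique solution $Z_i = (q_0,p_0,\tilde\lambda(q_0,p_0))$ provided $z_0\in C$. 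The Jacobian of $\Phi$ at $\Delta t=0$ is block triangular: the identity on the $(q,p)$ stage components and the invertible matrix $H_{\lambda\lambda}$ on the $\lambda$ stage components. Its nonsingularity yields a unique smooth stage solution, hence a well-defined map $z_0\mapsto z_1$, for all sufficiently small $\Delta t$ and all $z_0$ in a neighborhood of $C$.

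The second step is \emph{constraint preservation}. Because each stage satisfies $H_\lambda(Z_j)=0$, every stage lies on $C$; the update (\ref{eqn:symplecticrksecond}) in the $\lambda$-rows reads $0 = 0$ (the $\lambda$-rows of $J$ vanish), so it carries no information there, and one must instead argue that the computed $z_1$ again satisfies $H_\lambda(z_1)=0$. The cleanest route is to observe that the method, restricted to $C$, reduces exactly to the same symplectic RK method applied to the reduced canonical system $\dot q = \tilde H_p$, $\dot p = -\tilde H_q$ with $\tilde H(q,p):=H(q,p,\tilde\lambda(q,p))$ --- this is legitimate precisely because of the commutation of multiplier-elimination with passing to the Hamiltonian vector field, established in the discussion preceding Proposition~\ref{thm:symrk}. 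The reduced method produces $(q_1,p_1)$, and we then \emph{define} $\lambda_1 := \tilde\lambda(q_1,p_1)$; the main point is to check that this is consistent with the output of the full scheme, i.e.\ that the full and reduced stage equations coincide under $\lambda_j = \tilde\lambda(Q_j,P_j)$, so that $z_1\in C$ automatically. \emph{Convergence of the same order} then follows immediately: the reduced system is a smooth ODE, the reduced scheme is the given order-$r$ symplectic RK method, so $(q_1,p_1)$ is an order-$r$ approximation of the exact flow on $C$, and $\lambda_1=\tilde\lambda(q_1,p_1)$ inherits the same order by smoothness of $\tilde\lambda$. Finally, \emph{symplecticity} requires no new work: Proposition~\ref{thm:symrk} guarantees the full map preserves $\tfrac12\,dz\wedge J\,dz$, and pulling this back along the inclusion $i\colon C\to\mathbb{R}^m$ gives preservation of $i^*\omega = dq\wedge dp$, which is the symplectic form on $C$.

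I expect the main obstacle to be the constraint-preservation step, specifically verifying rigorously that the full RK scheme restricted to initial data on $C$ is \emph{identical} to the reduced scheme on the canonical system, so that the output lands back on $C$ rather than merely near it. One must confirm that the $\lambda$-stage equations $H_\lambda(Z_j)=0$ together with the $(q,p)$-stage equations really do force $\lambda_j=\tilde\lambda(Q_j,P_j)$ and that substituting this back reproduces exactly $F$-stages of the reduced field; the subtlety is that the increment $\Delta t\sum b_j JF_j$ only updates the $(q,p)$ components, so $\lambda_1$ is determined a posteriori by the constraint rather than by the quadrature, and one has to make sure no inconsistency arises. Once that identification is nailed down, the order and symplecticity conclusions are essentially corollaries.
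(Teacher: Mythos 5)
Your proposal is correct and follows essentially the same route as the paper's proof: the constraint rows of the Runge--Kutta system force $\Lambda_i=\tilde\lambda(Q_i,P_i)$ at every stage and leave $\lambda_1$ undetermined, the scheme is then identified with the same symplectic Runge--Kutta method applied to the reduced Hamiltonian $\tilde H(q,p)=H(q,p,\tilde\lambda(q,p))$ (legitimate by the commutation of multiplier elimination with forming the Hamiltonian vector field) and lifted back to $C$ by $\lambda_1:=\tilde\lambda(q_1,p_1)$, from which well-definedness, order, constraint preservation, and symplecticity on $C$ all follow. Your separate implicit-function-theorem argument for well-definedness is harmless but redundant---the paper obtains it directly from the equivalence with the reduced method---and note that both it and the smoothness of $\tilde\lambda$ quietly require invertibility of $H_{\lambda\lambda}$, the same strengthening of the bare unique-solvability hypothesis that the paper itself acknowledges immediately after the proposition.
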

\begin{proof}
By linear covariance of Runge--Kutta methods we can assume that $J$ is in Darboux form (although the theorem holds in any basis). 
Then the constraint part of the Runge--Kutta equations read
\begin{align*}
0 & = \nabla_\lambda H(Q_i,P_i,\Lambda_i),\ i=1,\dots,k, \\
0\frac{\lambda_1-\lambda_0}{\Delta t} &= \sum_{i=1}^s b_i\nabla_\lambda H(Q_i,P_i,\Lambda_i).
\end{align*}
Therefore the Lagrange multipliers $\Lambda_i$ at each stage
are given by the {\em exact} Lagrange multipliers evaluated at
$(Q_i,P_i)$, i.e. $\Lambda_i = \tilde \lambda(Q_i,P_i)$, and $\lambda_1$ is arbitrary. For convenience, we add the extra equations $\lambda_0 = \tilde \lambda(q_0,p_0)$, $\lambda_1=\tilde \lambda(q_1,p_1)$ which do not affect the method at all. The resulting method is equivalent to that obtained by eliminating the Lagrange multipliers in the Hamiltonian, applying a symplectic Runge--Kutta method, and lifting back to the constraint manifold by $\lambda=\tilde \lambda(q,p)$. It is therefore well defined for sufficiently small $\Delta t$ and convergent of the same order as the Runge--Kutta method. Because $\frac{1}{2}dz\wedge J dz = dq \wedge dp$, the symplectic form $dq\wedge dp$ is preserved on the constraint manifold.
\qquad \end{proof}

Note that the assumptions are satisfied if $|H_{\lambda\lambda}|\ne 0$. The constraints may be nonlinear in $\lambda$, and need not be solved analytically; the entire Runge--Kutta system for $(Q_i,P_i,\Lambda_i)$ can be numerically solved simultaneously.

\begin{cor}
Symplectic Runge--Kutta methods yield convergent constraint-preserving symplectic integrators for the Hamiltonian formulation of the Lagrange and sub-Riemannian problems given in Propositions \ref{thm:lagham} and \ref{thm:laghamgeneral}. When velocities are calculated using the Legendre transform, the constraints
$g_i(q)\cdot \dot q= 0$ (resp. $g(q,\dot q)=0)$ are satisfied exactly at the stages, and if the endpoint Lagrange multipliers are defined by $H_\lambda(q_n,p_n,\lambda_n)=0$, then the constraints are satisfied exactly at the endpoints.
\end{cor}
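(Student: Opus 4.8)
The plan is to derive the corollary directly from the three preceding propositions, with the only real work lying in tracking how the abstract constraint $\nabla_\lambda H=0$ translates back into the original velocity constraint. First I would observe that Propositions~\ref{thm:lagham} and~\ref{thm:laghamgeneral} exhibit the Lagrange and sub-Riemannian problems as generalized Hamiltonian systems of the form~(\ref{eqn:indexone}) with constant antisymmetric $J$, and that their stated nondegeneracy hypotheses (linear independence of the $g_i$, respectively full rank of $G$) are exactly the index~1 condition required by Proposition~\ref{thm:4}. Hence Proposition~\ref{thm:4} applies verbatim and yields integrators that are well defined for small $\Delta t$, convergent of the order of the Runge--Kutta method, and preserve both the constraint submanifold $C$ and its symplectic form $dq\wedge dp$. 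This establishes the first sentence.

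For the second claim I would return to the Hamiltonian of Proposition~\ref{thm:lagham} and record the identity $\nabla_{\lambda_i}H = g_i(q)^T M^{-1}\bigl(p+\sum_j\lambda_j g_j(q)\bigr)=g_i(q)\cdot\dot q$, where $\dot q$ is the Legendre-transform velocity~(\ref{eqn:hamqdot}); in the general setting of Proposition~\ref{thm:laghamgeneral} the same conclusion $\nabla_{\lambda_i}H=g_i(q,\dot q)$ follows by a short computation using the Legendre relation $p=\nabla_{\dot q}F$, whereupon the terms involving $\partial\dot q/\partial\lambda$ cancel. Thus the physical constraint is \emph{identically} the condition $\nabla_\lambda H=0$ once the velocity is recovered by the Legendre transform. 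The key point is then supplied by the proof of Proposition~\ref{thm:4}: because the $\lambda$-rows of $J$ vanish, the stage equations force $\nabla_\lambda H(Q_i,P_i,\Lambda_i)=0$ for every stage $i$. Substituting the stage velocity $\dot Q_i = M^{-1}\bigl(P_i+\sum_j(\Lambda_i)_j g_j(Q_i)\bigr)$ into the identity shows $g_i(Q_i)\cdot\dot Q_i=0$ exactly, with no discretization error.

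Finally, for the endpoints I would note that the Runge--Kutta update leaves $\lambda_1$ otherwise undetermined, so the endpoint constraint is not automatic; it holds precisely when one adopts the supplementary definition $\lambda_n=\tilde\lambda(q_n,p_n)$, equivalently $H_\lambda(q_n,p_n,\lambda_n)=0$. With this choice the same identity gives $g_i(q_n)\cdot\dot q_n=0$ at the endpoints. The main, and essentially only, obstacle is the bookkeeping in this identification: verifying that the gradient in $\lambda$ of the Hamiltonian reproduces exactly $g_i(q)\cdot\dot q$ after the Legendre substitution. Since this is a one-line computation already implicit in Proposition~\ref{thm:lagham}, and every other assertion is an immediate consequence of Propositions~\ref{thm:lagham}, \ref{thm:laghamgeneral}, and~\ref{thm:4}, the proof will be short.
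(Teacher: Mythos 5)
Your proposal is correct and matches the argument the paper intends: the corollary is stated without explicit proof, and the intended derivation is exactly your assembly of Proposition \ref{thm:4} (stage equations force $\nabla_\lambda H(Q_i,P_i,\Lambda_i)=0$, endpoint multipliers chosen via $\tilde\lambda$) with the identity, already noted in the proof of Proposition \ref{thm:lagham}, that $\nabla_\lambda H=0$ is equivalent to $g_i(q)\cdot\dot q=0$ under the Legendre transform. Your cancellation argument for the general case of Proposition \ref{thm:laghamgeneral}, using $p=\nabla_{\dot q}F$ to eliminate the $\partial\dot q/\partial\lambda$ terms, is the correct one-line computation that the paper leaves implicit.
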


\section{Variational problems with holonomic and nonholonomic constraints}
\label{sec:holo}

Proposition~\ref{thm:lagham} allowed a nonholonomic variational problem to be converted into an index 1 constrained Hamiltonian
system that can be integrated using the symplectic midpoint rule.
In this section we show that if \emph{holonomic} constraints are
added to the original variational problem, then the resulting
Hamiltonian system is a simple \emph{holonomically} constrained system.   This system can
be solved by a symplectic method such as
\textsc{rattle}~\cite{mc-mo-ve-wi,Skeel94symplecticnumerical}.

\begin{prop}\label{thm:laghamholo}
   Let $M$ be a symmetric nonsingular $n \times n$ mass matrix,
   $V: \mathbb{R}^n \rightarrow \mathbb{R}$ a smooth
   potential, $g_i:\mathbb{R}^n\to\mathbb{R}^n$, $i=1,\dots,k$ be $k$ smooth functions , and $q $ be  a smooth extremal with fixed endpoints for the functional
   \begin{equation}\label{eq:action2}
   S({q})=\int_{t_0}^{t_1} L(t, {q}, \dot{{q}}) dt = \int_{t_0}^{t_1} \left( \frac{1}{2} \dot{{q}}^T M \dot{{q}} - V({q}) \right) dt
   \end{equation}
   subject to the {\em velocity} constraints ${g}_i({q}) \cdot \dot{{q}} = 0, i = 1, \ldots, k$ and the \emph{holonomic}
   constraints $h_i({q})  = 0, i = 1, \ldots, l$.
   Then
   \begin{align}\label{eq:hamhol}
      J\dot{{z}} &= \nabla H({z}) 
   \end{align}
   where
   \begin{align*}
      J &= \begin{pmatrix}0 & - \mathbb{I}_{n \times n} & 0 & 0\\
         \mathbb{I}_{n \times n} & 0 & 0 & 0\\0 & 0 & 0_{k \times k} & 0 \\ 0 & 0 & 0 & 0_{l\times l}\end{pmatrix},\quad
   {z} = \begin{pmatrix} {q} \\ {p} \\{\lambda}\\{\lambda^h} \end{pmatrix}, \\
      {p}  &=  M \dot{{q}} - \sum_{i=1}^k \lambda_i {g}_i({q}) \\
      H({z}) &= \frac{1}{2} \left( {p} + \sum_{i=1}^k \lambda_i {g}_i({q}) \right)^T M^{-1}  \left( {p} + \sum_{i=1}^k \lambda_i {g}_i({q}) \right)  + V({q})  + \sum_{i=1}^l \lambda_i^h h_i({q}) 
   \end{align*}
and, furthermore, the Euler--Lagrange equations for (\ref{eq:action2}) are equivalent to the generalized Hamiltonian system (\ref{eq:hamhol}).
If, in addition, the velocity constraints are linearly independent for all $q$, then Eq. (\ref{eq:hamhol}) is equivalent to a canonical holonomically constrained Hamiltonian system.

\end{prop}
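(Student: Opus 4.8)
The plan is to proceed in two stages, mirroring the structure of Proposition~\ref{thm:lagham} but now keeping track of the additional holonomic multipliers $\lambda^h$. First I would establish the equivalence of the constrained Euler--Lagrange equations with the generalized Hamiltonian system~(\ref{eq:hamhol}). Introducing multipliers $\lambda_1,\dots,\lambda_k$ for the velocity constraints and $\lambda_1^h,\dots,\lambda_l^h$ for the holonomic constraints, the augmented integrand is $F(q,\dot q,\lambda,\lambda^h) = \frac{1}{2}\dot q^T M\dot q - V(q) - \sum_i \lambda_i g_i(q)\cdot\dot q - \sum_i \lambda_i^h h_i(q)$. Since the holonomic multipliers enter $F$ only through the position-dependent term $-\sum_i \lambda_i^h h_i(q)$ and not through $\dot q$, the Legendre transform $p := \nabla_{\dot q}F = M\dot q - \sum_i \lambda_i g_i(q)$ is unchanged from Proposition~\ref{thm:lagham}. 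I would then carry out the same Legendre computation $H = \dot q\cdot p - F$, observing that the extra term reappears additively, so that $H(z) = \frac{1}{2}(p+\sum_i\lambda_i g_i)^T M^{-1}(p+\sum_i\lambda_i g_i) + V(q) + \sum_i \lambda_i^h h_i(q)$ as claimed. A direct check then shows $\nabla_p H$ reproduces~(\ref{eqn:hamqdot}), $-\nabla_q H$ reproduces the momentum equation (now carrying the extra force $-\sum_i\lambda_i^h\nabla h_i$), $\nabla_\lambda H=0$ recovers the velocity constraints, and $\nabla_{\lambda^h}H=0$ recovers the holonomic constraints $h_i(q)=0$.

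The second and more delicate stage is the claim that, under linear independence of the velocity constraints, the system is \emph{equivalent to a canonical holonomically constrained Hamiltonian system}. Here the key observation is that the velocity multipliers $\lambda$ and the holonomic multipliers $\lambda^h$ play structurally different roles. The equations $\nabla_\lambda H=0$ are exactly the linear system~(\ref{eqn:lambdasystem}) whose matrix $GM^{-1}G^T$ is nonsingular by the linear independence assumption, so $\lambda$ may be solved explicitly as $\lambda=\tilde\lambda(q,p)$ just as before; these constraints are index~1. By contrast, the equations $\nabla_{\lambda^h}H = 0$ are $h_i(q)=0$, which do \emph{not} determine $\lambda^h$ directly---these are the familiar holonomic (index~3) constraints. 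The plan is therefore to eliminate $\lambda$ by substituting $\tilde\lambda(q,p)$ into $H$, exactly as in the passage following Proposition~\ref{thm:lagham} where the two operations of eliminating multipliers and forming the Hamiltonian vector field were shown to commute. This yields a reduced Hamiltonian $\tilde H(q,p,\lambda^h) := H(q,p,\tilde\lambda(q,p),\lambda^h)$, and because $\lambda^h$ enters $H$ only through $\sum_i\lambda_i^h h_i(q)$ (a term independent of $p$ and of $\lambda$), the reduced Hamiltonian splits as $\tilde H(q,p,\lambda^h) = \hat H(q,p) + \sum_i \lambda_i^h h_i(q)$, where $\hat H$ is precisely the canonical Hamiltonian of the unconstrained-by-$\lambda$ reduced system.

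What remains is to recognize the resulting object as a canonical holonomically constrained system in the standard sense. After eliminating $\lambda$, the structure matrix restricts to the canonical symplectic block $\begin{pmatrix}0 & -\mathbb I\\ \mathbb I & 0\end{pmatrix}$ on $(q,p)$ together with the zero block on $\lambda^h$, and the dynamics reads $\dot q = \hat H_p$, $\dot p = -\hat H_q - \sum_i\lambda_i^h\nabla h_i(q)$, subject to $h_i(q)=0$. This is exactly a canonical Hamiltonian system on $T^*\mathbb{R}^n$ with Hamiltonian $\hat H$, holonomic constraints $h_i(q)=0$, and constraint forces $-\sum_i\lambda_i^h\nabla h_i$, i.e.\ the form to which \textsc{rattle} applies. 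I expect the main obstacle to be the bookkeeping needed to justify the commutation of $\lambda$-elimination with the passage to Hamilton's equations \emph{in the presence of} the $\lambda^h$ constraints: one must verify that the chain-rule terms $\sum_j H_{\lambda_j}\,\partial\tilde\lambda_j/\partial p_i$ still vanish identically because $H_{\lambda_j}=0$ along the constraint-solved manifold, and that the holonomic block, having no associated velocity equation, genuinely survives as an index~3 constraint rather than being inadvertently solved. Once this separation of the two multiplier types is made precise, the identification with a canonical holonomically constrained system is immediate.
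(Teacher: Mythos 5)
Your proposal is correct and takes essentially the same route as the paper: the same augmented Lagrangian $F$ (with both multiplier families), the unchanged Legendre transform $p=\nabla_{\dot q}F$, and the computation $H=\dot q\cdot p - F$, which the paper dispatches as ``a calculation along the same lines as Proposition~\ref{thm:lagham}.'' Your second stage---solving $\nabla_\lambda H=0$ via the nonsingular Gram matrix $GM^{-1}G^T$ so that the Hamiltonian splits as $\hat H(q,p)+\sum_i\lambda_i^h h_i(q)$ with the holonomic block surviving as a genuine constraint---is exactly the elimination the paper invokes (without detail) in the proof of Proposition~\ref{thm:rattlesym}, so you have merely filled in steps the paper leaves implicit.
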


\begin{proof}
As in Proposition \ref{thm:lagham} the extended Lagrangian $F$, the conjugate momenta $p$, and the Hamiltonian $H(q,p,\lambda,\lambda^h)$ are defined by
\begin{align*}
F &:= \frac{1}{2} \dot{{q}}^T M \dot{{q}} - V({q}) -  \sum_{i=1}^k \lambda_i {g}_i({q}) \cdot \dot{{q}} - \sum_{i=1}^l \lambda_i^h h_i({q}),\\
   {p}  & :=  \nabla_{\dot{{q}}} F  = M \dot{{q}} - \sum_{i=1}^k \lambda_i {g}_i({q}),\\
H &:= \dot{{q}} \cdot {p} - F. 
\end{align*}
The rest of the proof is a calculation along the same lines as for Proposition \ref{thm:lagham}.
\qquad \end{proof}

\begin{prop}\label{thm:rattlesym}
  Subject to standard nondegeneracy assumptions on the Hamiltonian, 
  the following algorithm yields a convergent, second order integrator that is symplectic on the constraint manifold defined by the (primary) holonomic constraints and the secondary constraints induced by them: (i) apply {\sc rattle} using the holonomic constraints; (ii) in the inner step of {\sc rattle}, when a time step of the unconstrained system is required, apply the midpoint rule to the generalized Hamiltonian system with Hamiltonian $H(q,p,\lambda,0)$.
\end{prop}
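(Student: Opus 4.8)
The plan is to reduce the proposed algorithm to a standard application of RATTLE to a canonical holonomically constrained Hamiltonian system and then invoke the known properties of RATTLE. First I would use Proposition~\ref{thm:laghamholo}: when the velocity constraints are linearly independent, the combined system (\ref{eq:hamhol}) is equivalent to a canonical holonomically constrained Hamiltonian system, obtained by eliminating the velocity-constraint multipliers $\lambda=\tilde\lambda(q,p)$ (solving $H_\lambda(q,p,\lambda,0)=0$ exactly as in Proposition~\ref{thm:lagham}) to produce the reduced Hamiltonian $\tilde H(q,p):=H(q,p,\tilde\lambda(q,p),0)$ together with the holonomic constraints $h_i(q)=0$ and the secondary constraints $Dh_i(q)\cdot\tilde H_p(q,p)=0$ induced by them. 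The target constraint manifold is thus $\{h=0,\ Dh\cdot\tilde H_p=0\}$ in the $(q,p)$ symplectic space.

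Next I would identify the inner step of the algorithm with the midpoint rule for $\tilde H$. In step (ii) the midpoint rule is applied to the generalized Hamiltonian system with Hamiltonian $H(q,p,\lambda,0)$, whose single midpoint stage is $\bar z=(z_0+z_1)/2$. As in the proof of Proposition~\ref{thm:4}, the $\lambda$-component of the midpoint equation reads $0=\nabla_\lambda H(\bar q,\bar p,\bar\lambda,0)$, which by index one forces $\bar\lambda=\tilde\lambda(\bar q,\bar p)$; and since $H_\lambda=0$ there, the chain rule gives $\nabla_{q,p}H(\bar q,\bar p,\tilde\lambda(\bar q,\bar p),0)=\nabla\tilde H(\bar q,\bar p)$, which is the commutation of multiplier elimination and Hamiltonian-vector-field formation established in Section~\ref{sec:integrators}. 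Hence the inner step of the algorithm is precisely the midpoint rule applied to the reduced Hamiltonian $\tilde H$.

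It then follows that the whole algorithm is RATTLE with midpoint inner step applied to the canonical holonomically constrained system $(\tilde H,h)$. I would close by invoking the standard theory of RATTLE for (non-separable) Hamiltonians: under the usual nondegeneracy hypotheses---the velocity-constraint Gram matrix $GM^{-1}G^T$ nonsingular, so that $\tilde\lambda$ exists and the inner index-one step is well-defined, together with the holonomic constraint matrix $Dh\,\tilde H_{pp}\,Dh^T$ nonsingular, so that the RATTLE projection equations are uniquely solvable for small $\Delta t$ and the secondary constraint manifold is well-defined---RATTLE is well-defined, convergent of order two (the order of the midpoint rule), preserves the constraint manifold, and is symplectic on it with respect to the restricted form $i^*(dq\wedge dp)$. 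Since $\tfrac12 dz\wedge J dz=dq\wedge dp$, this is exactly the claimed symplectic form.

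The main obstacle I anticipate is justifying that the standard RATTLE convergence and symplecticity theory---usually stated for a genuine unconstrained Hamiltonian flow approximated by a symplectic one-step method---applies verbatim once the inner ``unconstrained'' flow is itself the index-one system rather than a free flow. The reduction above resolves this in principle, since after elimination the inner system is a genuine Hamiltonian $\tilde H$; but care is needed to verify that $\tilde H$ inherits the regularity those proofs require, in particular that $\tilde H_{pp}$ is nonsingular so that the Legendre-type solvability and the constraint-projection nondegeneracy used by RATTLE hold uniformly for small $\Delta t$. This regularity is the substance of the ``standard nondegeneracy assumptions on the Hamiltonian'' in the statement.
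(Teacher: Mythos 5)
Your proposal is correct and follows essentially the same route as the paper's proof: eliminate the velocity-constraint multipliers to obtain a canonical holonomically constrained system with reduced Hamiltonian $\tilde H$, observe that the midpoint rule applied to the generalized system with Hamiltonian $H(q,p,\lambda,0)$ coincides with the midpoint rule for $\tilde H$, and then invoke the standard convergence and symplecticity theory of {\sc rattle}. The only difference is one of detail: the paper asserts the inner-step equivalence in a single sentence, whereas you justify it explicitly via the index-one argument of Proposition~\ref{thm:4} and the elimination--vector-field commutation from Section~\ref{sec:integrators}, and you also spell out the nondegeneracy conditions left implicit in the paper's ``standard assumptions.''
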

\begin{proof}
 Eliminating the velocity constraints by solving for the Lagrange multipliers $\lambda_i$ yields a standard holonomically constrained system. Applying {\sc rattle} (with the midpoint rule in the inner step) to this system yields a convergent second order integrator on the constraint surface. Applying the midpoint rule in the inner step is equivalent to applying the midpoint rule to the generalized Hamiltonian system with Hamiltonian $H(q,p,\lambda,0)$.
 \qquad \end{proof}

\section{Example: Sub-Riemannian geodesics} \label{sec:srexample}

Trajectories of a two-wheeled vehicle with a  front steering wheel and a non-steering back wheel, moving on a smooth surface, 
will be modelled.  
We consider the two-wheeled vehicle shown in
Fig.~\ref{fig:vehicle_two_wheeled} with length $L$, 
back wheel at $(z, w)$, and front wheel at $(x, y)$.  The front wheel is
at an angle $\phi$ and the vehicle is at an angle $\theta$.
        \begin{figure}            \centering
            \includegraphics[width=3.3cm]{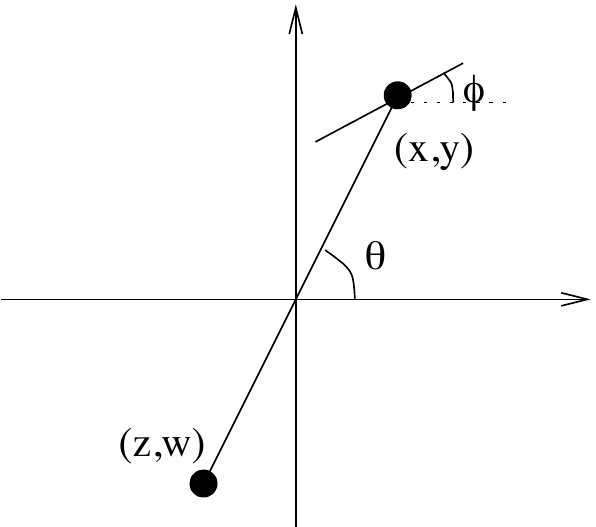}
            \caption{A two wheeled vehicle showing the front wheel angle $\phi$ and the vehicle angle of $\theta.$}
            \label{fig:vehicle_two_wheeled}
         \end{figure}

If the speed of the front wheel is $v$, its velocity of the front wheel must obey
\begin{equation*}
   \dot{x}  = v \cos \phi,\qquad
   \dot{y} =  v\sin \phi.
\end{equation*}
Eliminating $v$, the velocity of the front wheel obeys the constraint
\begin{equation}
  \dot{x} \sin \phi - \dot{y} \cos \phi = 0.    \label{eqn:vehicle_steering_constraint} 
  \end{equation}
  Similarly, the velocity of the back wheel obeys the constraint
\begin{equation}
   \dot{z} \sin \theta - \dot{w} \cos \theta = 0.    \label{eqn:vehicle_driving_constraint}
\end{equation}
We can eliminate equation~(\ref{eqn:vehicle_driving_constraint}), and
thus the variables $z$ and $w$, using the distance between the two wheels which
relates the four variables.  Notice that
\begin{equation*}
   x - z = L \cos \theta,\qquad
   y - w = L \sin \theta 
\end{equation*}
so that
\begin{equation*}
   \dot{z} = \dot{x} + L \dot\theta \sin \theta,\qquad 
   \dot{w}= \dot{y} - L \dot\theta \cos \theta 
\end{equation*}
which substituted into Eq.~(\ref{eqn:vehicle_driving_constraint})
gives
\begin{equation}
\dot{x} \sin \theta  - \dot{y} \cos \theta + L \dot{\theta} = 0 \label{eqn:vehicle_driving_constraint_simplified}
\end{equation}

We take the Lagrangian to be
\begin{equation}
\label{eqn:L2}
   L = \frac{1}{2}\left( \dot{x}^2 + \dot{y}^2 + \alpha \dot{\theta}^2 + \beta \dot{\phi}^2 \right) - V(x,y)
\end{equation}
where the potential $V(x,y)$ is the (scaled) height of the surface, which
together with the constraints (\ref{eqn:vehicle_steering_constraint}) and~(\ref{eqn:vehicle_driving_constraint_simplified})
gives an index 1 system as in Proposition \ref{thm:lagham}. That is, when $V=0$ we
are calculating geodesics of the sub-Riemannian metric defined by
Eqs. (\ref{eqn:L2}), (\ref{eqn:vehicle_steering_constraint}) and~(\ref{eqn:vehicle_driving_constraint_simplified}). 
To put it another way, we are seeking shortest paths that move the vehicle from one configuration to another subject to the constraints of its geometry---the `parallel parking' problem.
In the numerics, we use the midpoint rule.


We first used the potential $V(q) = -\cos r$, where $r$ is the midpoint of the vehicle,
and numerically checked the second-order convergence of the method,
 to a reference solution computed by {\sc matlab}'s {\tt ode15s},
numerical conservation of the symplectic form, exact conservation
of the original constraints (up to round-off error), and behaviour
of the energy error. A sample result is shown in Fig.~\ref{fig:energy}, 
from which the energy errors appear to be bounded,
as expected for a symplectic integrator.

        \begin{figure}[t]            \centering
            \includegraphics[width=4.5cm]{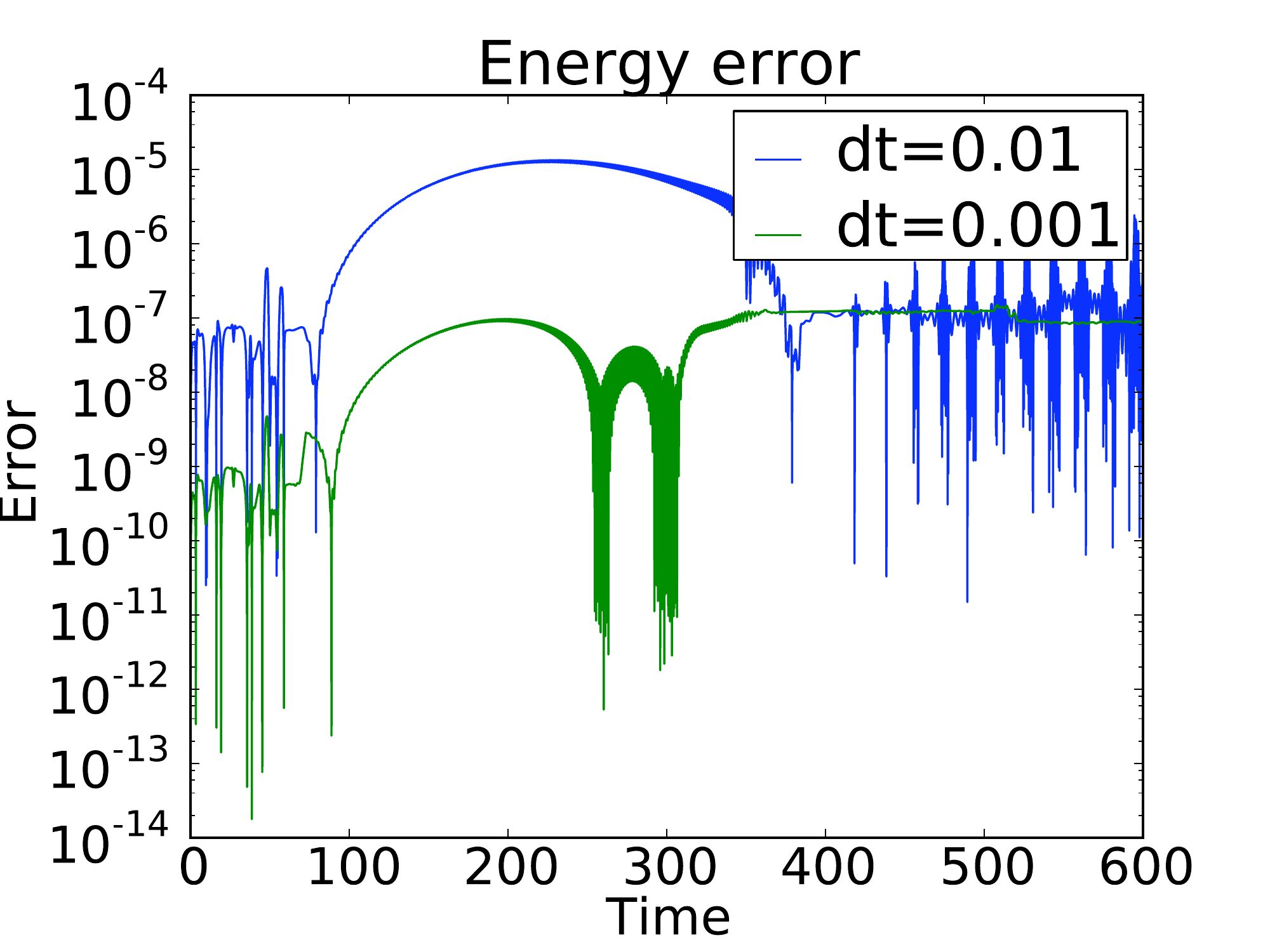}
            \input{energy_caption}
            \label{fig:energy}
         \end{figure}
         
We then studied special solutions of the free motion case $V=0$, which has two simple solutions that are relative
equilibria for the translation and rotation symmetries of the problem,
namely straight line and circular motion.
If the vehicle
starts with $\theta = \phi = 0$, the exact solution is a straight line motion. Let $\theta = \phi = 0$, $\dot{x} = 1$, and $\dot{y} = 0$.  The
constraints in equations~(\ref{eqn:vehicle_steering_constraint})
and~(\ref{eqn:vehicle_driving_constraint_simplified}) are satisfied.
Equation~(\ref{eqn:hamqdot}) gives the initial generalized momenta
values: all are zero except $p_x = 1$. The discretization gives the exact solution; 
however, as the solution is unstable, round-off errors eventually cause the
vehicle to wander.

For the circular motion, let $\theta = at$, $\phi = at + \frac{\pi}{2}$, $\dot{x} = -c
\sin(\theta)$, and $\dot{y} = c \cos(\theta)$.  There are two
constants, $a$ and $c$, to be determined.  
Equation~(\ref{eqn:vehicle_steering_constraint}) gives
$\lambda   = (1, -c)$, and
equation~(\ref{eqn:vehicle_driving_constraint_simplified}) gives
$aL = c$.  Using these values in equation~(\ref{eqn:hamp}) gives
the initial generalized momenta values: $(p_x, p_y, p_{\theta}, p_{\phi}) = (0, 0,
a(1+L^2), a)$.  For this simple trajectory $a$ is chosen to be $1$.
In Fig.~\ref{fig:circle_trajectory} the circle trajectory of
the vehicle is confirmed.
        \begin{figure}[t]            \centering
            \includegraphics[width=4.4cm,height=4.18cm]{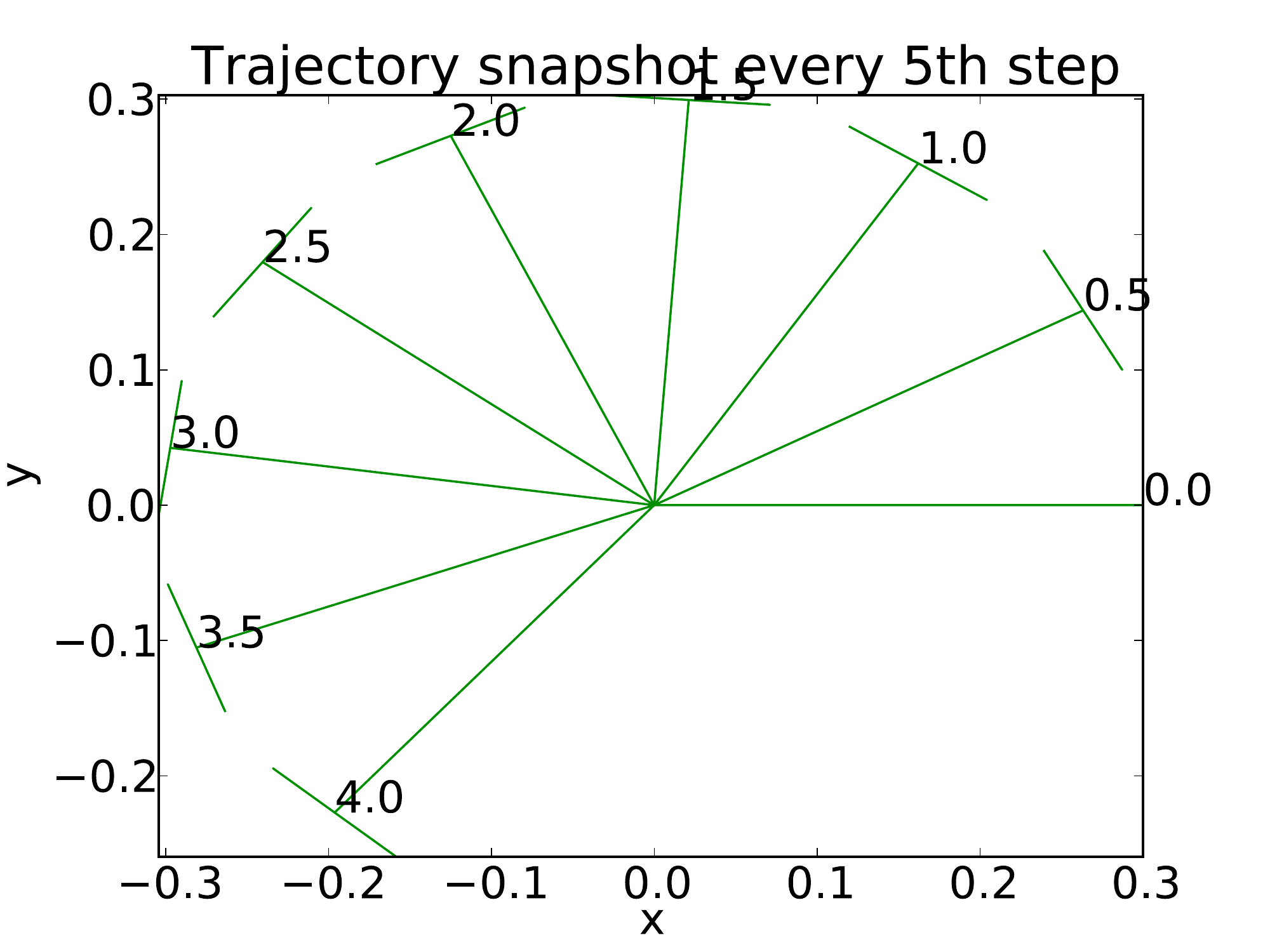}
              \includegraphics[width=5.6cm]{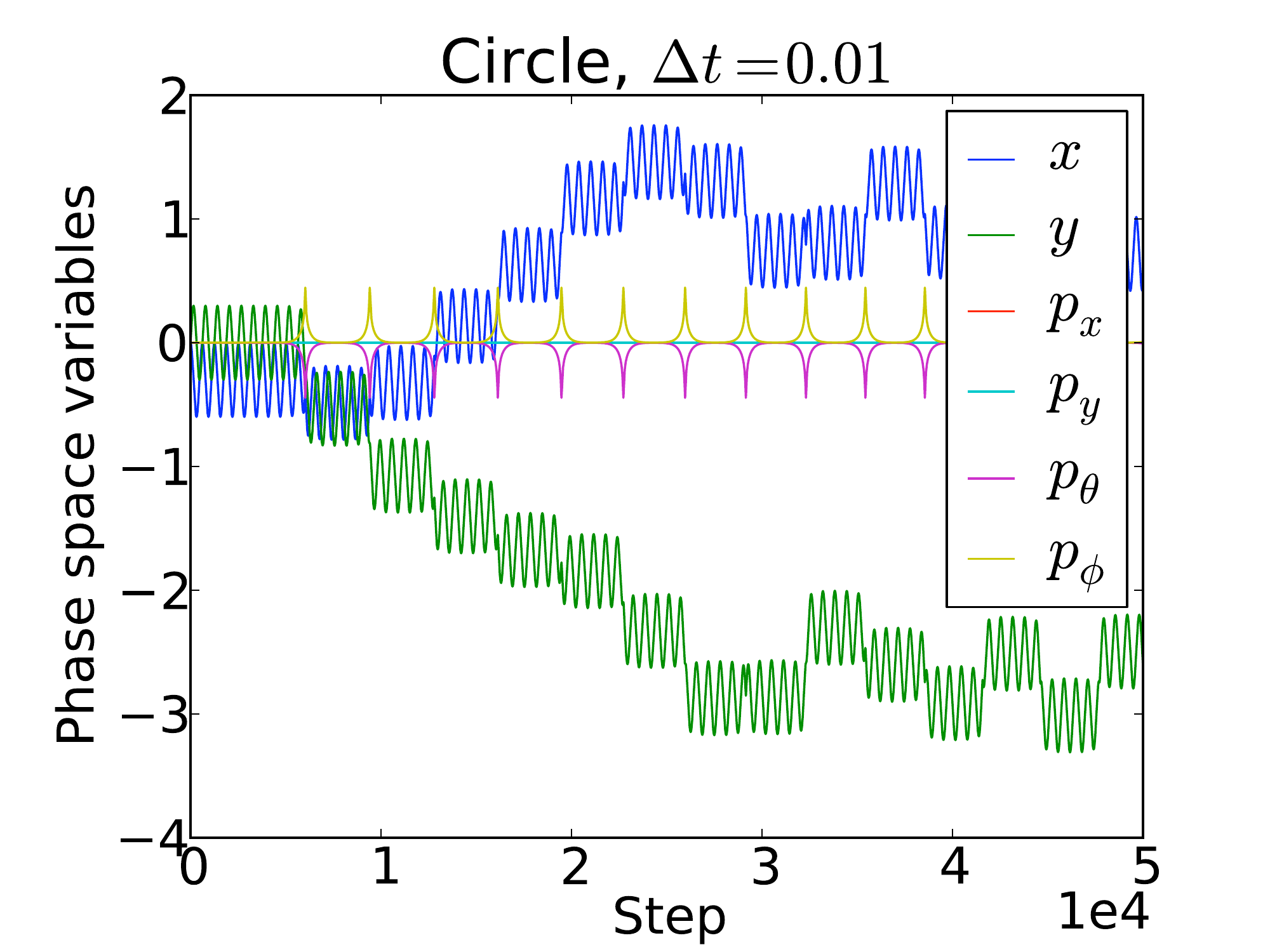}
          \input{circle_trajectory_caption}
           \label{fig:circle_trajectory}
         \end{figure}
If the trajectory is computed for larger times
the vehicle leaves the circle; the solution appears to be
unstable, but, interestingly, appears to repeatedly return 
to the circular orbit, indicating a possible relative homoclinic structure
in this problem. 


         \section{Example: the Heisenberg problem} \label{sec:hexample}

A previous study of geometric integrators for sub-Riemannian variational problems used
a discrete variational approach to obtain constrained symplectic integrators \cite{benito:083521}. Our approach,
applying symplectic integrators to the Hamiltonian formulation, yields geometric integrators
with the same geometric properties, but uses standard integrators that allow any order with standard 
implementations, and does not require an approximation of $\dot q$, that is, it naturally yields first-order
trajectories in $(q,p)$ instead of second-order trajectories in $q$.

We repeat the numerical illustration of \cite[p.~12]{benito:083521}, the Heisenberg problem, using our approach.
This is to 
find the extremal ${q}(t) = (x(t), y(t), z(t))$ of
   \[
   S({q})=\int_{t_0}^{t_1} L(t, {q}, \dot{{q}}) dt = \int_{t_0}^{t_1} \left( \frac{1}{2} \dot{{q}}^T \dot{{q}} - V({q})\right) dt
   \]
   subject to the constraint ${g}({q}) \cdot
   \dot{{q}} = 0$, where ${g}({q}) = (-y, x, 1)$.

Equation~(\ref{eqn:hamqdot}) gives $\dot{{q}}$:
\begin{equation}
\begin{pmatrix} \dot{x} \\ \dot{y} \\ \dot{z} \end{pmatrix} =
      \begin{pmatrix}p_x \\ p_y \\ p_z \end{pmatrix} +
         \lambda
         \begin{pmatrix} -y \\ x \\ 1 \end{pmatrix}.
      \label{eqn:heisenberg_top_rhs}
\end{equation}
Using equation~(\ref{eqn:hamsecond}), $\dot{{p}}$ can be written
\begin{equation} \label{eqn:vehicle_pdot}
   \begin{pmatrix} \dot{p_x} \\ \dot{p_y} \\ \dot{p_z} \end{pmatrix} =
       - \nabla V({q}) -
         \lambda \begin{pmatrix} 0 & 1 & 0 \\ -1 & 0 & 0 \\ 0 & 0 & 0  \end{pmatrix}
      \left[
      \begin{pmatrix}p_x \\ p_y \\ p_z \end{pmatrix} +
         \lambda
         \begin{pmatrix} -y \\ x \\ 1 \end{pmatrix}
      \right]
\end{equation} \label{eqn:heisenberg_pdot}
and we have the constraint
${g} \cdot \left( {p}  + \lambda {g} \right) = 0$, which gives
\[
\lambda = -\frac{{g} \cdot  {p}}{{g} \cdot {g}}.
\]

A simple trajectory starting with the same initial conditions as in
~\cite[pg.~15]{benito:083521} is shown in
Fig.~\ref{fig:heisenberg_trajectory}.  Their initial conditions
are $(x, y, z, \dot{x}, \dot{y}, \dot{z}, \lambda) = (0, 0, 0, 0.1, 0.3, 0, 1)$,
which when converted to generalized momenta variables are
$(x, y, z, p_x, p_y, p_z, \lambda) = (0, 0, 0, 0.1, 0.3, 1, 1)$.  
The results are consistent with~\cite[p. 14]{benito:083521}.

        \begin{figure}[t]            \centering
            \includegraphics[width=2in]{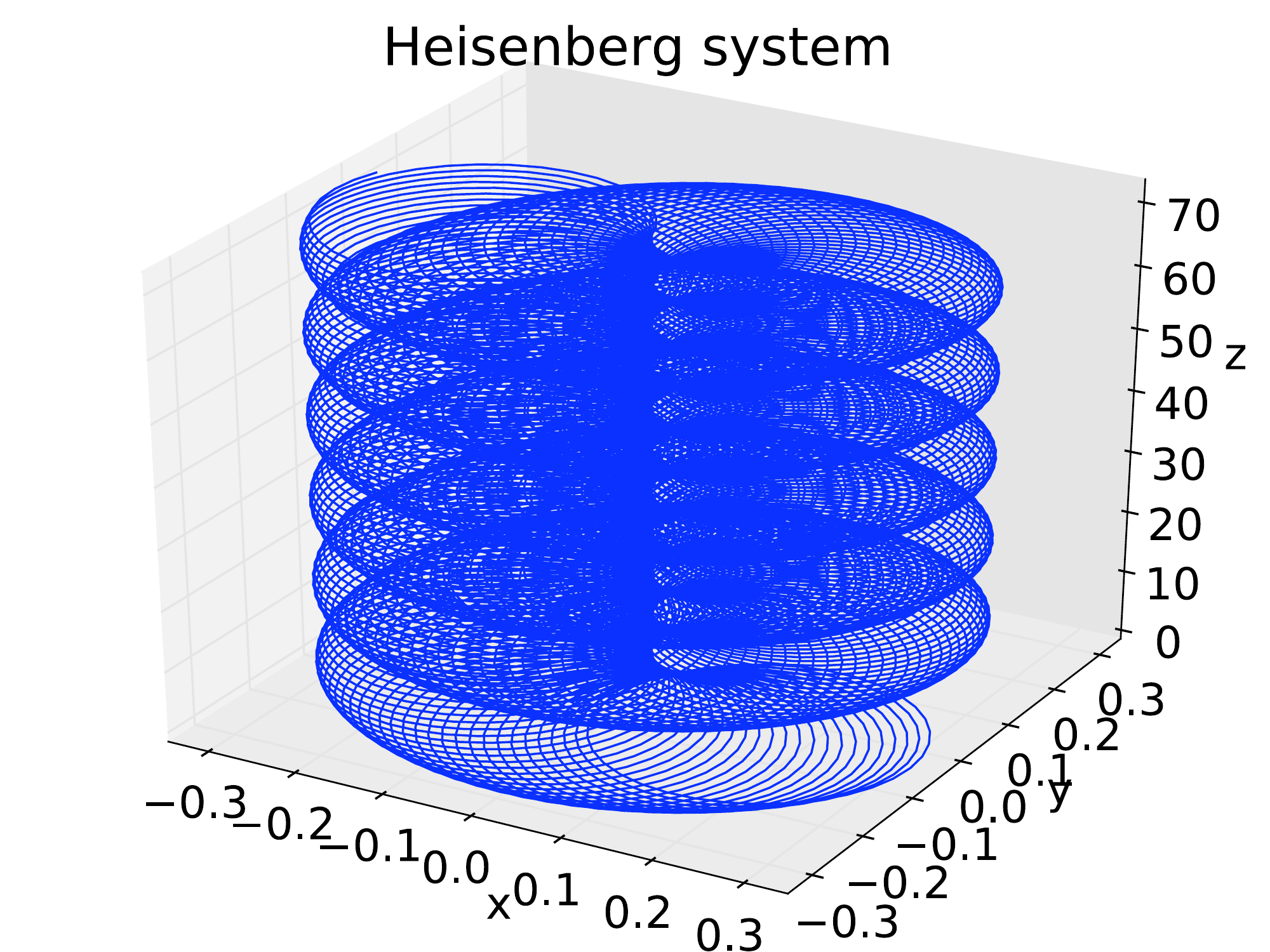}
            \input{heisenberg_trajectory_caption}
            \label{fig:heisenberg_trajectory}
         \end{figure}

\section{Discussion}
We have constructed symplectic integrators for a different class of con strained Hamiltonian systems than the holonomic constraints most commonly considered in the literature. The class includes important practical problems arising in sub-Riemannian geometry. We have restricted our attention to symplectic Runge--Kutta methods;  a generalization to partitioned methods in which different Runge--Kutta coefficients are used for $q$, for $p$, and for $\lambda$ is straightforward. In other work \cite{mc-mo-ve-wi}, we reinterpret these methods as an instance of {\sc rattle} in an extended phase space; that point of view also suggests different generalisations. 

We note that the nondegeneracy conditions in Propositions \ref{thm:lagham}, \ref{thm:laghamgeneral}, and \ref{thm:laghamholo} are essential for the integrators in Proposition \ref{thm:4},  indeed, for the entire approach, to work. It is not clear to what extent the approach can be extended to handle more general constraints, for example, to the system
$J \dot z = \nabla H + \lambda \nabla g$,  where the constraint submanifold $g(z)=0$ is symplectic. No symplectic, constraint-preserving method is known for this problem.
As remarked before Proposition \ref{thm:4}, 
a full study of the geometry of the relations $(z_0,z_1)$ generated in Proposition \ref{thm:symrk} remains to be undertaken. 
Any solutions are symplectic, so this gives access to a much larger class of symplectic maps than do traditional
generating functions. Note that new variables (analogous to $\lambda$) can be added as needed to generate
larger classes of maps. 

\section*{Acknowledgements}

O.~Verdier would like to acknowledge the support of the \href{http://wiki.math.ntnu.no/genuin}{GeNuIn Project}, funded by the \href{http://www.forskningsradet.no/}{Research Council of Norway},  as well as the hospitality of the \href{http://ifs.massey.ac.nz/}{Institute for Fundamental Sciences} of Massey University, New Zealand, where some of this research was conducted.
K.~Modin would like to thank the \href{http://www.math.ntnu.no}{Department of Mathematics at NTNU} in Trondheim and the \href{http://www.vr.se}{Swedish Research Council} for support. This research was supported by the 
 \href{http://wiki.math.ntnu.no/crisp}{CRiSP Project}, funded by the \href{http://cordis.europa.eu/fp7/home_en.html}{European Commission's Seventh Framework Programme}, and by the Marsden Fund of the Royal Society of New Zealand.
 
\bibliographystyle{siam}
\bibliography{IndexOneRevised}

\end{document}